\documentclass[conference]{ieeeconf}

\IEEEoverridecommandlockouts







\newcommand{\until}[1]{\{1,\dots, #1\}}

\newcommand{\supscr}[2]{#1^{\textup{#2}}}
\newcommand{\setdef}[2]{\{#1 \; | \; #2\}}

\newcommand{\diag}[1]{\operatorname{diag}(#1)}

\newcommand{\spn}{\operatorname{span}}






\newcommand{\rank}{\operatorname{rank}}













\renewcommand{\epsilon}{\varepsilon}




\newcommand{\chull}[1]{\operatorname{co}(#1)}















%
%








\usepackage{psfrag}









\setlength{\tabcolsep}{3pt}

\usepackage{url}
\usepackage{amsmath}
\usepackage{bbm}
\usepackage{color}

\usepackage{amsthm}
\usepackage{amssymb}
\usepackage{algpseudocode}
\usepackage{algorithm}
\algnewcommand{\Inputs}[1]{%
	\State \textbf{Input:}
	\Statex \hspace*{\algorithmicindent}\parbox[t]{.8\linewidth}{\raggedright #1}
}
\algnewcommand{\Initialize}[1]{%
	\State \textbf{Initialize:}
	\Statex \hspace*{\algorithmicindent}\parbox[t]{.8\linewidth}{\raggedright #1}
}
\algnewcommand{\Outputs}[1]{%
	\State \textbf{Output:}
	\Statex \hspace*{\algorithmicindent}\parbox[t]{.8\linewidth}{\raggedright #1}
}

\usepackage{relsize}
\usepackage{graphicx}
\graphicspath{ {images/} }

\newcommand{\longthmtitle}[1]{\mbox{}{\bf \textit{(#1).}}}
\theoremstyle{plain}

\newtheorem{lem}{Lemma}

\theoremstyle{definition}
\newtheorem{assump}{Assumption}

\theoremstyle{remark}

\DeclareMathOperator{\ones}{\mathbf{1}}
\DeclareMathOperator{\zeros}{\mathbf{0}}
\DeclareMathOperator{\I}{\mathbbm{I}}
\DeclareMathOperator{\G}{\mathcal{G}}
\DeclareMathOperator{\N}{\mathcal{N}}
\DeclareMathOperator{\R}{\mathbbm{R}}
\DeclareMathOperator{\E}{\mathcal{E}}

\DeclareMathOperator{\Ex}{\mathbbm{E}}
\DeclareMathOperator{\Pp}{\mathbbm{P}}

\DeclareMathOperator{\X}{\mathcal{X}}
\DeclareMathOperator{\Px}{\mathsf{P}}

\newcommand{\nll}{\operatorname{null}}
\newcommand{\aff}{\operatorname{aff}}
\newcommand{\relint}{\operatorname{relint}}

\makeatletter
\newcommand\RedeclareMathOperator{%
	\@ifstar{\def\rmo@s{m}\rmo@redeclare}{\def\rmo@s{o}\rmo@redeclare}%
}
\newcommand\rmo@redeclare[2]{%
	\begingroup \escapechar\m@ne\xdef\@gtempa{{\string#1}}\endgroup
	\expandafter\@ifundefined\@gtempa
	{\@latex@error{\noexpand#1undefined}\@ehc}%
	\relax
	\expandafter\rmo@declmathop\rmo@s{#1}{#2}}
\newcommand\rmo@declmathop[3]{%
	\DeclareRobustCommand{#2}{\qopname\newmcodes@#1{#3}}%
}
\@onlypreamble\RedeclareMathOperator
\makeatother

\RedeclareMathOperator{\S}{\mathcal{S}}
\RedeclareMathOperator{\P}{\mathcal{P}}

\begin{document}

\title{\huge Maximizing Algebraic Connectivity of Constrained Graphs in Adversarial Environments}

\author{Tor Anderson
\quad
Chin-Yao Chang
\quad
Sonia Mart\'{i}nez
\thanks{Tor Anderson, Chin-Yao Chang,  and Sonia Mart{\'\i}nez are
	with the Department of Mechanical and Aerospace Engineering,
	University of California, San Diego, CA, USA. Email: {\small {\tt
			\{tka001, chc433, soniamd\}@eng.ucsd.edu}}. This research was supported by the Advanced Research Projects Agency - Energy under the NODES program, Cooperative Agreement DE-AR0000695.}
}


\maketitle

\begin{abstract}
  This paper aims to maximize algebraic connectivity of networks via
  topology design under the presence of constraints and an
  adversary. We are concerned with three problems. First, we formulate
  the concave-maximization topology design problem of adding edges to
  an initial graph, which introduces a nonconvex binary decision
  variable, in addition to subjugation to general convex constraints
  on the feasible edge set. Unlike previous approaches, our method is
  justifiably not greedy and is capable of accommodating these
  additional constraints.  We also study a scenario in which a
  coordinator must selectively protect edges of the network from a
  chance of failure due to a physical disturbance or adversarial
  attack. The coordinator needs to strategically respond to the
  adversary's action without presupposed knowledge of the adversary's
  feasible attack actions. We propose three heuristic algorithms for
  the coordinator to accomplish the objective and identify worst-case
  preventive solutions.
  Each algorithm is shown to be effective in simulation and  their
  compared performance is discussed.
\end{abstract}

\IEEEpeerreviewmaketitle

\section{Introduction}
\textit{Motivation.} Multi-agent systems are pervasive in new
technology spaces such as power networks with distributed energy
resources like solar and wind, mobile sensor networks, and
large-scale distribution systems. In such systems, communication
amongst agents is paramount to the propagation of information, which
often lends itself to robustness and stability of the system. Network
connectivity is well studied from a graph-theoretic standpoint, but
the problem of designing topologies when confronted by engineering
constraints or adversarial attacks is not well addressed by current
works. We are motivated to study the NP-hard graph design problem of
adding edges to an initial topology and to develop a method to solve
it which has both improved performance and
allows for direct application to the aforementioned constrained and
adversarial settings.

\textit{Literature Review.} The classic paper~\cite{MF:73} by Miroslav
Fiedler proposes a scalar metric for the \emph{algebraic connectivity}
of undirected graphs, which is given by the second-smallest eigenvalue
of the graph Laplacian and is also referred to as the \emph{Fiedler}
eigenvalue. One of the main problems we are interested in studying is
posed in~\cite{AG-SB:06}, where the authors develop a heuristic for
strategically adding edges to an initial topology to maximize this
eigenvalue. Lower and upper bounds for the Fiedler eigenvalue with
respect to adding a particular edge are found; however, the work is
limited in that their approach is greedy and may not perform well in
some cases. In addition, the proposed strategy does not address how to
handle additional constraints that may be imposed on the network, such
as limits on nodal degree or restricting costlier edges. The authors
of~\cite{SB-AG-TB:13} aim to solve the problem of maximizing
connectivity for a particular robotic network scenario in the presence
of an adversarial jammer, although the work does not sufficiently
address scenarios with a more general adversary who may not be subject
to dynamical constraints. The Fiedler eigenvector, which has a close
relationship to the topology design problem, is studied
in~\cite{RM:98}. Many methods to compute this eigenvector exist, such
as the cascadic method in~\cite{JU-XH-JX-LZ:14}. However, these papers
do not fully characterize how this eigenvector evolves from adding or
removing edges from the network, which is largely unanswered by the
literature. The authors of~\cite{XD-TJ:10} study the spectra of
randomized graph Laplacians, and~\cite{PY-RAF-GJG-KML-SSS-RS:10} gives
a means to estimate and maximize the Fiedler eigenvalue in a mobile
sensor network setting. However, neither of these works consider the
problem from a design perspective. In the celebrated paper by Goemans
and Williamson~\cite{MG-DW:95}, the authors develop a relaxation and
performance guarantee on solving the MAXCUT problem, which has not yet
been adapted for solving the topology design problem. Each
of~\cite{NMMA:06,SB:06} survey existing results related to the Fiedler
eigenvalue and contain useful references.

\textit{Statement of Contributions.}  This paper considers three
optimization problems and has two main contributions. First, we
formulate the concave-maximization topology design problem from the
perspective of adding edges to an initial network, subject to general
convex constraints plus an intrinsic binary constraint. We then pose a
scenario where a coordinator must strategically select links to
protect from random failures due to a physical disturbance or
malicious attack by a strategic adversary. In addition, we formulate
this problem from the adversary's perspective. Our first main
contribution is a method to solve the topology design problem (and, by
extension, the protected links problem). We develop a novel
MAXCUT-inspired SDP relaxation to handle the binary constraint, which
elegantly considers the whole problem in a manner where previous
greedy methods fall short. Our next main contribution returns to the
coordinator-adversary scenario. We first discuss the nonexistence of a
Nash equilibrium in general. This motivates the development of an
optimal \emph{preventive} strategy in which the coordinator makes an
optimal play with respect to any possible response by the
adversary. We rigorously prove several auxiliary results about the
solutions of the adversary's computationally hard concave-minimization
problem in order to justify heuristic algorithms which
may be used by the coordinator to search for the optimal preventive
strategy. A desirable quality of these algorithms is they do not
presuppose the knowledge of the adversary's feasibility set, nor the
capability of solving her problem. Rather, the latter two algorithms
observe her plays over time and use these against her construct an
effective preventive solution. Simulations demonstrate the
effectiveness of our SDP relaxation for topology design and the
performance of the preventive-solution seeking algorithms when applied
to the  adversarial link-protection problem.


\section{Preliminaries}\label{sec:prelims}
This section establishes some notation and preliminary concepts which
will be drawn upon throughout the paper.

\subsection{Notation}\label{ssec:notation}
We denote by $\R$ the set of real numbers. The notation $x\in\R^n$ and $A\in\R^{n\times m}$ indicates an $n$-dimensional real vector and an $n$-by-$m$-dimensional real matrix, respectively. 
The gradient of a real-valued
multivariate function $f:\R^n\rightarrow\R$ with respect to a vector
$x\in\R^n$ is written $\nabla_x f(x)$. The $\supscr{i}{th}$ component
of a vector $x$ is indicated by $x_i$, all components of $x$ not including the $\supscr{i}{th}$ component is indicated by $x_{-i}$, and the $\supscr{(i,j)}{th}$ element of
a matrix $A$ is indicated by $A_{ij}$. 
The standard inner product is written 
$\left\langle x,y \right\rangle = x^\top y$ and the Euclidean norm of a
vector $x$ is denoted $\Vert x\Vert_2 = \sqrt{\langle x,x\rangle}$. The closed
Euclidean ball of radius $\epsilon$ centered at a point $x$ is expressed as
$\mathcal{B}_\epsilon (x)$. Two vectors $x$ and $y$
are perpendicular if $\left\langle x,y \right\rangle = 0$,
indicated by $x\perp y$, and the orthogonal complement to a span of vectors $a_i$ is written $\spn\{a_i\}^\perp$, meaning $x\perp y, \forall x\in\spn\{a_i\}, \forall y\in\spn\{a_i\}^\perp$. 
Elementwise multiplication is represented by $x \diamond y = (x_1 y_1, \dots
, x_n y_n)^\top$. A symmetric matrix $A\in\R^{n\times n}$ has $n$ real
eigenvalues ordered as $\lambda_1 \leq \dots \leq \lambda_n$, sometimes written $\lambda_i(A)$ if clarification is needed, with
associated eigenvectors $v_1, \dots , v_n$ that are assumed to be of
unit magnitude unless stated otherwise. A positive semi-definite matrix $A$ is indicated by $A \succeq 0$. We denote componentwise inequality as $x \succeq y$. The notation $\zeros_n$ and
$\ones_n$ refers to the $n$-dimensional vectors of all zeros and all
ones, respectively.  We use the notation $\I_n:= I_n -
\dfrac{\mathbf{1}_n\mathbf{1}^\top_n}{n}$ and refer to this matrix as
a \emph{pseudo-identity matrix}; note that $\nll{(\I_n)} = \spn
\{\ones_n\}$. 
The operator
$\operatorname{diag}$ for vector arguments produces a diagonal matrix
whose diagonal elements are the entries of the vector. The empty set
is denoted $\emptyset$ and the cardinality of a finite set
$\mathcal{S}$ is denoted $\vert\mathcal{S}\vert$. We express the $m$ Cartesian
product of sets by means of a superscript $m$, such as  
$[\underline{s}_i,\overline{s}_i]^m \subset \R^m$, where $\underline{s}_i,\overline{s}_i \in \R$. 
Probabilities and
expectations are indicated by $\Pp$ and $\Ex$, respectively. 

\subsection{Graph Theory}\label{ssec:graph-theory}

We refer to~\cite{CDG-GFR:01} as a supplement for the concepts we
describe throughout this subsection. In multi-agent engineering
applications, it is useful to represent a network mathematically as a
graph $\G = (\N , \E)$ whose node set is given by $\N = \until{n}$ and
edge set $\E \subseteq \N \times \N$, $\vert\E\vert = m$, which
represents a physical connection or ability to transmit a message
between agents.  We refer to the set of nodes that node $i$ is
connected to as its set of neighbors, $\N_i\subset\N$.  We consider
undirected graphs so $(i,j)\in\E$ indicates $j\in\N_i$ and $i\in\N_j$.
The graph has an associated Laplacian matrix $L\in\R^{n\times n}$,
whose elements are 
\begin{equation*}
L_{ij} =
\begin{cases}
  -1, & j \in \N_i, j \neq i, \\
  \vert \N_i \vert , & j = i, \\
  0, & \text{otherwise}.
\end{cases}
\end{equation*}

Note that $L \succeq 0$. It is  well known  that
the multiplicity of the zero eigenvalue is equal to the number of
connected components in the graph~\cite{CDG-GFR:01}. To expand on
this, \emph{connected} graphs have a one-dimensional null space
associated with the eigenvector $\ones_n$. The incidence matrix of $L$
is given by $E\in\{-1,0,1\}^{n\times m}$, where the $\supscr{l}{th}$
column of $E$, given by $e_l\in\{-1,0,1\}^{n}$, is associated with an
edge $l\sim(i,j)$. In particular, the $\supscr{i}{th}$ element of
$e_l$ is $-1$, the $\supscr{j}{th}$ element is $1$, and all other
elements are zero. A vector $x\in\{0,1\}^m$ encodes the
(dis)connectivity of the edges. In this sense, for
$l\in\until{m}$,~$x_l = 0$ indicates $l$ is disconnected and $x_l = 1$
indicates $l$ is connected. Then, $L = E \diag{x} E^\top$.

\subsection{Set Theory}

A limit point $p$ of a set $\P$ is a point such that any neighborhood
$\mathcal{B}_\epsilon (p)$ contains a point $p'\in\P$. A set is closed
if it contains all of its limit points, it is bounded if it is
contained in a ball of finite radius, and it is compact if it is both
closed and bounded. Let $\mathcal{A}_i = \setdef{p}{a_i^\top p \geq
  b_i}$ be a closed half-space and $\P = \mathcal{A}_1 \cap \dots \cap
\mathcal{A}_r \subset \R^m$ be a finite intersection of closed
half-spaces. If $\P$ is compact, we refer to it as a
polytope. Consider a set of points $\mathcal{F} =
\setdef{p\in\P}{a_i^\top p = b_i, i\in \mathcal{I} \subseteq
  \until{r}; a_j^\top p \geq b_j, j\in
  \until{r}\setminus\mathcal{I}}$. Let $h = \dim(\spn\{a_i\})$ be the
dimension of the subspace spanned by the vectors
$\{a_i\}_{i\in\mathcal{I}}$.  Then, we refer to $\mathcal{F}$ as an
$(m-h)$-dimensional face of $\P$. Lastly, denote the affine hull of
$\mathcal{F}$ as $\aff(\mathcal{F}) = \setdef{p + w}{p,w\in\R^m,
  p\in\mathcal{F}, w \perp \spn\{a_i\}_{i\in\mathcal{I}}}$ and define
the relative interior of $\mathcal{F}$ as $\relint(\mathcal{F}) =
\setdef{p}{\exists \epsilon > 0 :
  \mathcal{B}_\epsilon(p)\cap\aff(\mathcal{F}) \subset \mathcal{F}}$.

\section{Problem Statements}\label{sec:prob-formulation}

This section formulates the three optimization problems that we
study. The first problem aims to add edges to an initial topology to
maximize algebraic connectivity of the final graph. The second problem
introduces a coordinator who is charged with protecting some links in
a network that are subject to an external disturbance or
attack. 
The third problem takes the opposite approach of the latter problem by
minimizing connectivity from an adversarial point of view.

\subsection{Topology Design for Adding
  Edges}\label{ssec:top-des-add-edges}
Consider a network of agents with some initial (possibly
disconnected) graph topology characterized by an edge set $\E_0$ and
Laplacian
$L_0$. 
We would like to add $k$ edges to $\E_0$, possibly subject to some
additional convex constraints, so as to maximize the Fiedler
eigenvalue of the final Laplacian $L^\star$. 
This problem is well motivated: the Fiedler eigenvalue dictates convergence rate of many first-order distributed algorithms such as consensus and gradient descent.
First,
let $\E$ be the complete edge set (not including redundant edges or self loops) with $m = \vert\E\vert$. 
Consider the incidence matrix $E$ associated with $\E$ and the vector of edge connectivities $x$, as described in Section~\ref{ssec:graph-theory}. 
The constrained topology design problem is
then formulated as 
\begin{subequations} \label{eq:gen-prob}
	\begin{align}
	\Px 1: \
	& \underset{x,\alpha}{\text{max}}
	& & \alpha,  \label{eq:gen-fiedler-cost}\\
	& \text{subject to}
	& & E(\diag{x})E^\top \succeq \alpha\I_n , \label{eq:gen-fiedler-const}\\
	&&& \sum_{l=1}^m x_l \leq k+\vert\E_0\vert, \label{eq:gen-link-tot-const}\\
	&&& x\in \X, \label{eq:gen-cvx-const}\\
	&&& x_l = 1, \ l\sim (i,j) \in \E_0,\\
	&&& x_l \in \{0,1\}, \ l\in\until{m}. \label{eq:gen-link-const}
	\end{align}
\end{subequations}

In $\Px 1$, the solution $\alpha^\star$ is precisely the value for
$\lambda_2$ of the final Laplacian solution given by $L(x^\star) = E(\diag{x^\star})E^\top$. This is encoded in the
constraint~\eqref{eq:gen-fiedler-const}, where the pseudo-identity
matrix $\I_n$ has the effect of filtering out the fixed
zero-eigenvalue of the Laplacian. A useful relation is $\lambda_2 =
\underset{z}{\text{inf}} \setdef{z^\top L(x)z}{z\perp \ones_n, \Vert z
  \Vert_2 = 1}$, which shows that $\lambda_2$ as a function of $x$ is
a pointwise infimum of linear functions of $x$ and is therefore
concave. By extension, $\Px 1$ is a concave-maximization problem in
$x$. The set $\X$ is assumed compact and convex and may be chosen by
the designer in accordance with problem constraints such as
bandwidth/memory limitations, restrictions on nodal degrees, or
restricting certain edges from being 
chosen. These constraints may manifest in applications such as communication bandwidth limitations amongst Distributed Energy Resource Providers for Real-Time Optimization in renewable energy dispatch~\cite{CAISO-BPM:17}. 
The binary constraint~\eqref{eq:gen-link-const} is nonconvex and makes
the problem NP-hard. Handling this constraint is one of the main
objectives of this paper and will be addressed in
Section~\ref{sec:top-design}.

As for existing methods of solving $\Px 1$, one option is solve it
over the convex hull of the constraint set, which is given by $[0,1]^m
\cap \X$. Then, the problem may be solved in $k$ steps by iteratively
adding the edge $l\sim(i,j)$ for which $x_l$ is maximal in each
step. This is discussed in~\cite{AG-SB:06} and the references
therein. Although this method allows the designer to easily capture
$\X$, it is not a satisfying relaxation because the underlying
characteristics of the connectivity are not well captured. The authors
of~\cite{AG-SB:06} propose an alternate method which chooses the edge
$l$ for which $\dfrac{\partial \lambda_2}{\partial x_l} = v_2^\top
\dfrac{\partial L(x)}{\partial x_l}v_2 = v_2^\top e_l e_l^\top v_2 =
(v_{2,i} - v_{2,j})^2$ is maximal. This method is limited in that it
is (a) greedy and (b) cannot account for $\X$. We are motivated to
develop a relaxation for $\Px 1$ which improves on existing techniques
in both performance and the capability of handling
constraints. 

\subsection{Topology Design for Protecting Edges}\label{ssec:top-prot-nets}
We now formulate a problem which is closely related to $\Px 1$ and interesting to study in its own right. Motivated by the scenario of guarding against disruptive
physical disturbances or adversarial attacks, consider a coordinator
who may protect up to $k_s$ links from failing in a network. The
failure of the links are assumed to be independent Bernoulli random
variables whose probabilities are encoded by the vector
$p\in[0,1]^m$. Then, consider the coordinator's decision vector
$s\in\S = \{0,1\} \cap \S'$, where $\S'$ is assumed compact and convex. We write
out the Laplacian as before, $L(x) = E \diag{x} E^\top$. Following a
disturbance or attack, the probability that an edge $l$ is
(dis)connected is given by $\Pp (x_l \equiv 1) = (s_l-1)p_l + 1$
(resp. $\Pp (x_l \equiv 0) = (1-s_l)p_l$). The interpretation for the
vector $s$ here is that, if a particular element $s_l = 1$, it is
deterministically connected and considered \emph{immune} to the
disturbance or attack. The coordinator's problem is:
\begin{subequations} \label{eq:coord-prob}
	\begin{align}
	\Px 2: \
	& \underset{s,\alpha}{\text{max}}
	& & \alpha,  \label{eq:coord-fiedler-cost}\\
	& \text{subject to}
	& & \Ex\left[ E(\diag{x})E^\top \right] \succeq \alpha\I_n, \label{eq:coord-fiedler-const}\\
	&&& \Pp (x_l \equiv 1) = (s_l - 1)p_l + 1, \label{eq:coord-prbty-const1} \\
	&&& \Pp (x_l \equiv 0) = (1-s_l)p_l, \label{eq:coord-prbty-const2} \\
	&&& \sum_{l=1}^m s_l \leq k_s, \label{eq:coord-tot-const} \\
	&&& s\in\S, '
	\quad s\in \{0,1\}^m. \label{eq:coord-bin-const}
	\end{align}
\end{subequations}

Due to the linearity of the expectation operator,~\eqref{eq:coord-fiedler-const} is
equivalent to $E(\diag{(s - \ones_m)\diamond p + \ones_m})E^\top
\succeq \alpha \I_n$, which is an LMI (linear matrix inequality) in $s$. We note that, as in $\Px
1$, the objective of $\Px 2$ may be thought of as a pointwise infimum
of linear functions and, as such, is a binary concave-maximization
problem in $s$.

The formulation in $\Px 2$ presupposes a fixed vector $p$. However, it
may be the case that a strategic attacker detects preventive action
taken by the coordinator and adjusts her strategy to improve the
likelihood of disconnecting the network. We now formulate the
attacker's problem for some known, fixed coordinator strategy $s$:
\begin{subequations} \label{eq:atk-prob}
	\begin{align}
	\Px 3: \
	& \underset{p,\alpha}{\text{min}}
	& & \alpha, \label{eq:atk-fiedler-cost}\\
	& \text{subject to}
	& & \lambda_2 (\Ex\left[ E(\diag{x})E^\top \right]) = \alpha, \label{eq:atk-fiedler-const}\\
	&&& \Pp (x_l \equiv 1) = (s_l - 1)p_l + 1, \label{eq:atk-prbty-const1} \\
	&&& \Pp (x_l \equiv 0) = (1-s_l)p_l, \label{eq:atk-prbty-const2} \\
	&&& p\in\P, \quad p\in [0,1]^m. \label{eq:atk-cvx-const}
	\end{align}
\end{subequations}

Notice here that the optimization is instead over $p$, and is now a
minimization of $\alpha$. The constraint~\eqref{eq:atk-fiedler-const}
is now a nonlinear equality rather than an LMI, which manifests itself from this being a
concave-\emph{minimization} problem. This equality is not a convex constraint 
and will be addressed in Section~\ref{sec:prot-links}. We assume $\P$
is compact and convex.

\section{An SDP Relaxation for Topology Design}\label{sec:top-design}

This section aims to develop a relaxed approach to solve $\Px 1$ in a computationally efficient manner. Ideally, such
an approach may also be straightforwardly extended to problems of the
form $\Px 2$. To do this, we draw on intuition from the randomized
hyperplane strategy given in~\cite{MG-DW:95} for solving the
well-studied MAXCUT problem.

There are two notable differences between $\Px 1$ and MAXCUT: the
entries of the decision vector in $\Px 1$ take values in $\{0,1\}$,
whereas in MAXCUT, the decision (let's say $z$) takes values $z_i \in
\{-1,1\}$. The latter is convenient because it is equivalent to $z_i^2
= 1$. Additionally, the enumeration in MAXCUT is \emph{symmetric} in
the sense that, if $z^\star$ is a solution, then so is
$-z^\star$. However, $\Px 1$ is \emph{assymmetric} in the sense that,
if $x^\star$ is a solution, it \emph{cannot} be said that $-2x^\star +
\ones_m$ (effectively swapping the zeros and ones in the elements of
$x^\star$) is a solution. We rectify these issues with a
transformation and variable lift, respectively. Introduce a vector $y
= 2x - \ones_m$
and notice $x\in\{0,1\}^m$ maps to $y\in\{-1,1\}^m$. Then, define $Y =
yy^\top$ so that $y_l^2 = 1$ may be enforced via $Y_{ll} = 1$,
$l\in\until{m}$.
In addition, define $\widetilde{Y} = \begin{bmatrix}y \\ 1\end{bmatrix}\begin{bmatrix}y \\
  1\end{bmatrix}^\top = \begin{bmatrix}Y & y \\ y^\top &
  1\end{bmatrix}$ to capture the
asymmetry 
in the original variable $x$. Now, we are ready to reformulate $\Px 1$
as an SDP in the variable $y$:
\begin{subequations} \label{eq:SDP-prob}
	\begin{align}
	\Px 4: \
	& \underset{Y,y,\alpha}{\text{max}}
	& & \alpha,  \label{eq:SDP-fiedler-cost}\\
	& \text{subject to}
	& & \dfrac{1}{2}E(\diag{y} + I_m)E^\top \succeq \alpha\I_n, \label{eq:SDP-fiedler-const}\\
	&&& \widetilde{Y} =	\begin{bmatrix}Y & y \\ y^\top & 1\end{bmatrix} \succeq 0,  \label{eq:SDP-bigY-const}\\
	&&& \rank{(\widetilde{Y})} = 1, \label{eq:SDP-rank-const}\\
	&&& \widetilde{Y}_{ll} = 1, \ l\in\until{m},\label{eq:SDP-ii-const}\\
	&&& y\in\mathcal{Y}, \label{eq:SDP-cvx-const}\\
	&&& y_l = 1, \ l\sim (i,j)\in \E_0,\\
	&&& \dfrac{1}{2}\sum_i (y_i + 1) \leq k + \vert\E_0\vert, \label{eq:SDP-link-tot-const}
	\end{align}
\end{subequations}
where $\mathcal{Y}$ is an affine transformation on the set
$\mathcal{X}$ in~\eqref{eq:gen-cvx-const}, and we have simply used the
transformation and variable lift 
to rewrite the other constraints. The problem $\Px 4$ is
equivalent to $\Px 1$: the NP-hardness now manifests itself in the
nonlinear constraint~\eqref{eq:SDP-rank-const}. Dropping this
constraint produces a \emph{relaxed} solution $\widetilde{Y}^\star$
with the rank of $\widetilde{Y}^\star$ not necessarily
one. 

This also produces a solution $y^\star$ which can be mapped back to
$x^\star$. Of course, $x^\star$ may not take binary values due to the
dropped rank constraint. We now briefly recall the geometric intuition
to approximate the solution to MAXCUT in~\cite{MG-DW:95} with many
technical details omitted here for brevity. Let $Z^\star\in\R^{m_z
  \times m_z}$ be a rank $r_z$ solution to the rank-relaxed MAXCUT SDP
problem. Decompose $Z^\star = W^\top W$ with $W \in\R^{r_z\times
  m_z}$, and notice the columns of $W$ given by $w_i \in \R^{r_z}$,
$i\in\until{m_z}$, are vectors on the $r_z$-dimensional unit ball due
to $Z^\star_{ii} = 1$, $i\in\until{m_z}$. In order to determine a solution $z^\star\in\{-1,1\}^{m_z}$, generate a uniformly
random unit vector $p\in\R^{r_z}$ which may define a
hyperplane. 
If the vector $w_i$ lies
on one side of the hyperplane, i.e. $\langle w_i, p\rangle \geq 0$,
set the corresponding element $z_i = 1$. If it is on
the other side of the hyperplane, $\langle w_i, p\rangle < 0$, set $z_i = -1$. 
Geometrically speaking,
the stronger a vector $w_i$ is aligned with $p$, the more
``correlated'' (for lack of a better term) node $i$ is with the set $\setdef{j\in\until{m_z}}{z_j = 1}$
and vice-versa. From another perspective, consider the case
$r_z = 1$ which implies the solution is equivalent to the nonrelaxed
problem. Then, it can be seen that $\langle w_i, p\rangle \in
\{-1,1\}$ and the approach gives the exact optimizer for MAXCUT.

For our problem, decompose $\widetilde{Y}^\star = U^\top U$ with $U\in
\R^{r\times m+1}$, and obtain unit-vectors $u_l\in\R^r$,
$l\in\until{m+1}$, from the columns of $U$. Because of the asymmetry
of our problem, we do not implement a random approach to determine the
solution. Instead, notice that the last column $u_{m+1}$ is
qualitatively different than $u_l$, $l\in\until{m}$ due to the
variable lift. 
We have
that 
$y_l = \langle u_l ,
u_{m+1}\rangle, l\in\until{m}$. Thus, larger entries of $y_l$ correspond to vectors
$u_l$ on the unit ball which are more ``aligned'' with $u_{m+1}$,
which hearkens to the geometric intuition for the MAXCUT
solution and $u_{m+1}$ may be thought of as the vector $p$ from MAXCUT. The entries $y_l$ give a quantitative measure of the effectiveness of adding edge $l\sim (i,j)$. 

We suggest iteratively choosing the edge $l$ associated with the largest element of $y$ for
which $l\notin \E_0$. 
If a particular edge is infeasible, this is
elegantly accounted for by~\eqref{eq:SDP-cvx-const} and is reflected in the relaxed solution to $\Px 4$. This
approach may be iterated $k$ times, updating $\E_0$ and decrementing
$k$ each time in accordance with~\eqref{eq:SDP-link-tot-const} to construct a satisfactory solution to the original
NP-hard binary problem. In addition, this formulation is easily
adaptable to solve $\Px 2$ via a similar transformation and variable
lift in $s$. 

\section{Protecting Links Against an Adversary}\label{sec:prot-links}

This section begins by studying the Nash equilibria of a game between
the coordinator and attacker where they take turns solving $\Px 2$ and
$\Px 3$. We first study the (non)existence of the Nash equilibria of
this game, and use this result to motivate the development of a
\emph{preventive} strategy for the coordinator. We then provide some
auxiliary results about the solutions of $\Px 3$ and use these to
justify methods for finding such a preventive strategy.

\subsection{Nash Equilibria}

We begin by adopting the shorthand notation $L(s,p) = \Ex\left[
  E(\diag{x})E^\top \right]$ with $x$ distributed as
in~\eqref{eq:coord-prbty-const1}--\eqref{eq:coord-prbty-const2},
i.e. $L_{ij}(s,p) = (1-s_l)p_l-1, l\sim(i,j)\in\E, i\neq j$ and
$L_{ii}(s,p) = -\sum_{j\in\N_i}
L(s,p)_{ij}$. 
This matrix may be
interpreted 
as a weighted Laplacian whose elements are given by the righthand side
of~\eqref{eq:coord-prbty-const1}, \eqref{eq:atk-prbty-const1}. We also
adopt the shorthand $\alpha(s,p) =
\underset{z}{\text{inf}}\setdef{z^\top L(s,p) z}{z \perp \ones_n ,
  \Vert z \Vert_2 = 1}$ to refer to the Fiedler eigenvalue of
$L(s,p)$, and note that $\alpha (s,p)$, as a pointwise infimum of
bilinear functions, is concave-concave in $(s,p)$. From this, recall
the first-order concavity relation~\cite{SB-LV:04}
\begin{equation}\label{eq:alpha-concave}
	\begin{aligned}
          \alpha(s^2,p) &\leq \alpha (s^1,p) + \nabla_s \alpha (s^1,p)^\top (s^2 - s^1), \ \forall s^1, s^2 \in\S,\\
          \alpha(s,p^2) &\leq \alpha (s,p^1) + \nabla_p \alpha (s,p^1)^\top (p^2 - p^1), \ \forall p^1, p^2\in\P. \\
	\end{aligned}
\end{equation}

To get a better grasp on this, we compute the gradient of $\alpha$
with respect to both $s$ and $p$. Let $v$ be the Fiedler eigenvector
associated with the second-smallest eigenvalue (in this case,
$\alpha$) of $L(s,p)$. Then, 
\begin{equation}\label{eq:partial-s-alpha}
  \dfrac{\partial \alpha}{\partial s_l} = v^\top \dfrac{\partial
    L(s,p)}{\partial s_l} v ,
  = v^\top p_l e_l e_l^\top v = p_l(v_i - v_j)^2,
\end{equation}
which is a straightforward extension of the computation shown near the
end of Section~\ref{ssec:top-des-add-edges}. Additionally, 
\begin{equation}
\label{eq:partial-p-alpha}
\dfrac{\partial \alpha}{\partial p_l} = \begin{cases}
-(v_i - v_j)^2, & s_l = 0\\
0, & s_l = 1.
\end{cases}
\end{equation}
The gradient with respect to $s$ and $p$ is a vector with 
elements given by~\eqref{eq:partial-s-alpha}--\eqref{eq:partial-p-alpha},
which are nonnegative for $s$ and nonpositive for $p$. Also,
note that $v \neq \ones_n$, $v \neq 0$, implying the quantity $(v_i -
v_j)^2$ must be strictly positive for some edges $l\sim (i,j)$.

Now, consider a game where the coordinator and attacker take turns
solving and implementing the solutions of $\Px 2$ and $\Px 3$,
respectively. A Nash equilibrium is a point $(s^\star,p^\star)$ with
the property
\begin{equation}\label{eq:NE}
  \alpha(s,p^\star) \leq \alpha (s^\star,p^\star) \leq \alpha (s^\star,p), \forall s\in \S, \forall p\in\P,
\end{equation}
which is a stationary point of the aforementioned game. We now state a
lemma 
to motivate the remainder of this section.

\begin{lem}\longthmtitle{Nonexistence of Nash
    Equilibrium}\label{lem:no-nash} {\rm A Nash equilibrium point
    $(s^\star,p^\star)$ satisfying~\eqref{eq:NE} is not guaranteed to
    exist in general.}
\end{lem}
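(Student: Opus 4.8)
Since the claim is a negative existence statement, the plan is to construct a single explicit instance of the coordinator--attacker game that provably admits no saddle point satisfying \eqref{eq:NE}. I would take the smallest informative topology, the path on three nodes $\N = \{1,2,3\}$ with edges $e_1 \sim (1,2)$ and $e_2 \sim (2,3)$, pair it with a budget-limited attacker $\P = \setdef{p \in [0,1]^2}{p_1 + p_2 \le 1}$, and restrict the coordinator to protecting at most one link, $\S = \setdef{s \in \{0,1\}^2}{s_1 + s_2 \le 1}$ (the single-protection budget $k_s = 1$ is what keeps the problem nontrivial, since protecting both edges would let the coordinator trivially restore full connectivity). Writing the expected edge weights $w_l = 1 - (1 - s_l)p_l$, the weighted Laplacian $L(s,p)$ is tridiagonal, and the matrix--tree theorem gives its Fiedler value in closed form, $\alpha(s,p) = (w_1 + w_2) - \sqrt{w_1^2 - w_1 w_2 + w_2^2}$.

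The cleanest route to nonexistence is to exhibit a strict duality gap. Since $\max_s \min_p \alpha(s,p) \le \min_p \max_s \alpha(s,p)$ always holds, and any point satisfying \eqref{eq:NE} would force these to be equal, a strict inequality rules out a Nash equilibrium outright. For the coordinator-commits value I would invoke the fact---established in the discussion preceding the lemma---that $\alpha(s,\cdot)$ is concave in $p$, so the attacker's minimization over the simplex $\P$ is attained at a vertex in $\{(0,0),(1,0),(0,1)\}$; evaluating these for each admissible $s$ shows the attacker can always dump the full budget on an unprotected edge and drive the Fiedler value to $V_{\mathrm{low}} = 0$ (disconnection). For the attacker-commits value I would show the coordinator's best response always shields the more heavily attacked edge, so that $\max_s \alpha(s,p)$ collapses to $\phi(\min(p_1,p_2))$ with $\phi(t) = (2-t) - \sqrt{1 - t + t^2}$; the attacker then balances the attack at $p = (\tfrac12,\tfrac12)$, giving $V_{\mathrm{up}} = \tfrac32 - \tfrac{\sqrt 3}{2} > 0$. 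As $V_{\mathrm{low}} < V_{\mathrm{up}}$, no saddle point can exist.

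I expect the principal obstacle to be the bookkeeping behind the min--max value rather than any hard estimate: one must verify that the coordinator's discrete best response genuinely prefers to shield the more-attacked edge (which reduces to checking that $\phi$ is monotonically decreasing on $[0,1]$) and that $\min(p_1,p_2)$ is maximized over the budget simplex at its center. A secondary subtlety is making the case analysis exhaustive over the degenerate configurations---$s=(0,0)$ and symmetric attacks---so that no equilibrium slips through. As an alternative that sidesteps the value computation, one can instead trace the best-response correspondence and exhibit the cycle $s=(1,0) \to p=(0,1) \to s=(0,1) \to p=(1,0) \to s=(1,0)$, showing directly that it has no fixed point; the duality-gap argument is, however, the more economical of the two.
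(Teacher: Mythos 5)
Your proposal is correct, but it reaches the conclusion by a genuinely different route than the paper. The paper's proof also builds a small counterexample, but on the complete graph $K_3$ with the coordinator allowed to protect up to two of the three edges; it then runs an exhaustive case analysis on where the attacker's unit budget sits (on a protected edge, on the unprotected edge, or spread out) and shows that in every case one of the two inequalities in~\eqref{eq:NE} is violated --- in effect, it traces the best-response dynamics and shows they never settle. That is precisely the ``alternative'' you mention at the end of your write-up. Your primary argument instead uses the path $P_3$ with a single-edge protection budget, derives the closed-form Fiedler value $\alpha = (w_1+w_2)-\sqrt{w_1^2-w_1w_2+w_2^2}$ (your matrix--tree computation $\lambda_2\lambda_3 = 3w_1w_2$ checks out), and exhibits a strict duality gap $\max_s\min_p \alpha = 0 < \tfrac{3}{2}-\tfrac{\sqrt{3}}{2} = \min_p\max_s\alpha$; since a saddle point satisfying~\eqref{eq:NE} would force these two values to coincide, none can exist. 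The supporting details you flag as the real work all hold: $\phi(t)=(2-t)-\sqrt{1-t+t^2}$ is strictly decreasing on $[0,1]$ (the inequality $(1-2t)^2 < 4(1-t+t^2)$ reduces to $1<4$), the concave minimization over the triangle is attained at a vertex, and $\min(p_1,p_2)$ is maximized at the center of the budget set. What your approach buys is a quantitative and arguably more airtight certificate --- you only need to compute two numbers rather than verify that every deviation in every case strictly improves the deviator's payoff, a point the paper's Cases 1--3 assert somewhat informally; what the paper's approach buys is brevity and independence from any eigenvalue formula. Either argument suffices for the lemma.
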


\begin{proof}
	To show this result, we provide a simple counterexample. Consider a complete graph with $n=3$ nodes and $m=3$ edges, $l\in\{1,2,3\} \sim \{(1,2),(1,3),(2,3)\} = \E$, and the coordinator and attacker feasibility sets $\S = \{0,1\}^3 \cap \setdef{s}{\sum_l s_l \leq 2}$, $\P = \setdef{p}{\sum_l p \leq 1}$. We consider three cases of the strategies $(s,p)$ to show the nonexistence of a Nash equilibrium here.
	
	\textbf{Case 1:} The attacker play $p^\star$ is such that $p^\star_l = 1$ for $l$ such that $s^\star_l = 1$. This point violates the righthand side of~\eqref{eq:NE}. To see this, notice $\exists l'\neq l$ s.t. $s^\star_{l'} = 0$ and that the play $p^{\star\prime}$ with $p^{\star\prime}_{l'} = 1$ is optimal for $\Px 3$.
	
	\textbf{Case 2:} The attacker play $p^\star_l$ is such that $p^\star_l = 1$ for $l$ such that $s^\star_l = 0$. This point violates the lefthand side of~\eqref{eq:NE}, and the coordinator recomputes an optimal play $s^{\star\prime}$ with $s^{\star\prime}_{l} = 1$.
	
	\textbf{Case 3:} The attacker play $p^\star$ is such that $\nexists l$ with $p^\star_l = 1$. This point violates the righthandside of~\eqref{eq:NE} and the attacker recomputes an optimal play $p^{\star\prime}$ with $p^{\star\prime}_l = 1$ for the $l$ corresponding to $s^\star_l = 0$.
\end{proof}

The simple counterexample employed in the proof of
Lemma~\ref{lem:no-nash} suggests that Nash equilibria are also
unlikely to exist in more meaningful scenarios. This result should not
come as a surprise: the solutions to $\Px 2$ and $\Px 3$ are in direct
conflict with one another, and playing sequentially has the effect of
the coordinator ``chasing'' the attacker around the network. We find
that the cases for which we can construct a Nash equilibrium are
trivial: for example, if $\ones_m \in \S$, the coordinator may choose
$s^\star = \ones_m$, and the attacker's solution set is trivially the
whole set $\P$ with the interpretation that the attacker is powerless
to affect the value of $\alpha$. Then, $(\ones_m,p)$ are Nash
equilibria $\forall p\in\P$, and are not interesting.

\subsection{Coordinator's Preventive Strategy}\label{ssec:prev-strat}

Lemma~\ref{lem:no-nash} motivates the study of an optimal
\emph{preventive} strategy for the coordinator under the assumption
that the attacker may always make a play in response to the
coordinator's action. Instead of a Nash equilibrium satisfying~\eqref{eq:NE}, we
seek a point $(s^\star,p^\star)$ satisfying the following:
\begin{equation}\label{eq:prevent-strat}
  (s^\star,p^\star) = \underset{s\in\S}{\text{argmax}} \ 
  \underset{p\in\P}{\text{argmin}} \ \alpha(s,p).
\end{equation}
The interpretation of $s^\star$ solving~\eqref{eq:prevent-strat} is
that it provides the best-case solution for the coordinator given that
the attacker makes the last play. In this sense, $s^\star$ is not
optimal for $p^\star$; rather, it is an optimal play with respect to
the whole set $\P$.

From the coordinator's perspective, the objective function
${\underset{p\in\P}{\text{argmin}} \ \alpha(s,p)}$ is a pointwise
infimum of concave functions of $s$, and therefore the problem is a
concave maximization. However, computing such a point may be dubious
in practice, particularly since we have not assumed the coordinator
has the capability of solving the concave minimization problem $\Px 3$
or even knowledge of $\P$. It would, however, be convenient to use the
attacker's solutions to $\Px 3$ against herself. To do this, we
establish some lemmas to gain insight on the solution sets of the
attacker. This helps us construct heuristics for computing $s^\star$
in the sense of~\eqref{eq:prevent-strat}. We assume $\S, \P \neq
\emptyset$.

\begin{lem}\longthmtitle{Attacker's Solution Tends to be Noninterior}\label{lem:nonint-point} {\rm Consider the set of solutions
    $\P^\star \subseteq \P$ to $\Px 3$ for some $s$. If there exists
    point $p^\star \in \P^\star$ which is an interior point of $\P$,
    then $\P^\star = \P$.}
\end{lem}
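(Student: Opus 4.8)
The plan is to reduce the statement to a standard fact in convex analysis. As already observed in the paper, $\alpha(s,\cdot)$ is a pointwise infimum of functions that are affine in $p$ (for fixed $s$) and is therefore concave in $p$; indeed the paper records that $\alpha$ is concave-concave in $(s,p)$. Hence $\Px 3$ is the minimization of a \emph{concave} function over the compact convex set $\P$, and the lemma is an instance of the fact that a concave function attaining its minimum over a convex set at an interior point must be \emph{constant} on that set. I would prove this directly rather than cite it, since the argument is short and self-contained.

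First I would fix an interior minimizer $p^\star \in \P^\star \cap \interior{\P}$ and set $\alpha^\star := \alpha(s,p^\star)$, so that optimality gives $\alpha(s,p) \ge \alpha^\star$ for every $p\in\P$. Let $q\in\P$ be arbitrary; the goal is to show $\alpha(s,q)=\alpha^\star$, for then every feasible point is optimal and $\P^\star=\P$. Because $p^\star$ is interior, I can extend the segment from $q$ through $p^\star$ slightly past $p^\star$: taking the direction $d=p^\star-q$, there is $\epsilon>0$ with $q':=p^\star+\epsilon d\in\P$. A direct computation then shows that $p^\star=\tfrac{1}{1+\epsilon}q' + \tfrac{\epsilon}{1+\epsilon}q$, i.e. $p^\star$ is a strict convex combination of the two feasible points $q$ and $q'$.

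Applying concavity of $\alpha(s,\cdot)$ at this convex combination, and then using that $\alpha(s,q)$ and $\alpha(s,q')$ are each at least $\alpha^\star$, yields
\begin{equation*}
\alpha^\star = \alpha(s,p^\star) \;\ge\; \tfrac{1}{1+\epsilon}\,\alpha(s,q') + \tfrac{\epsilon}{1+\epsilon}\,\alpha(s,q) \;\ge\; \tfrac{1}{1+\epsilon}\,\alpha^\star + \tfrac{\epsilon}{1+\epsilon}\,\alpha^\star = \alpha^\star .
\end{equation*}
Equality must hold throughout, and since both convex weights are strictly positive this forces $\alpha(s,q)=\alpha^\star$. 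As $q\in\P$ was arbitrary, $\alpha(s,\cdot)$ is constant on $\P$, every feasible point minimizes it, and therefore $\P^\star=\P$.

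The only delicate step — and the sole place where the interiority hypothesis is genuinely used — is guaranteeing that the extended point $q'=p^\star+\epsilon d$ remains in $\P$. This is precisely what interiority provides: some ball $\mathcal{B}_\delta(p^\star)$ lies in $\P$, so for $\epsilon$ small enough $q'$ is feasible. Everything else is the concavity inequality and bookkeeping on the weights. I would add the remark that the same argument goes through verbatim with $\interior{\P}$ replaced by $\relint(\P)$, which is the more natural hypothesis when $\P$ is lower-dimensional (as in the equality-constrained faces described in the preliminaries); one simply extends the segment inside $\aff(\P)$ instead of all of $\R^m$.
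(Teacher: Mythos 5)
Your proof is correct, but it takes a genuinely different route from the paper's. The paper argues via the first-order concavity relation~\eqref{eq:alpha-concave}: at an interior minimizer $p^\star$ one can step a small distance in the direction $-\nabla_p\alpha(s,p^\star)$ while staying in $\P$, and the first-order inequality then strictly decreases the objective unless $\nabla_p\alpha(s,p^\star)=\zeros_m$; once the gradient vanishes, the same inequality bounds $\alpha(s,p)$ above by $\alpha(s,p^\star)$ for every $p\in\P$, so every point is optimal. You instead give the zeroth-order argument: write $p^\star$ as a strict convex combination of an arbitrary $q\in\P$ and an extended point $q'=p^\star+\epsilon(p^\star-q)\in\P$, and squeeze the concavity inequality against optimality to force $\alpha(s,q)=\alpha(s,p^\star)$. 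Both are valid, but yours is arguably the more careful one here: the paper's proof implicitly assumes $\alpha(s,\cdot)$ is differentiable at $p^\star$, whereas $\lambda_2$ of the weighted Laplacian is only guaranteed to be concave, not smooth (it can fail to be differentiable where the second eigenvalue has multiplicity greater than one), so strictly speaking the paper's gradient should be read as a supergradient. Your argument needs no such caveat, and your closing remark that interiority can be weakened to $\relint(\P)$ by working inside $\aff(\P)$ is a genuine (if minor) strengthening that the gradient version would also admit only after projecting the supergradient onto the affine hull. The one degenerate case worth a word in your write-up is $q=p^\star$, where the direction $d$ vanishes, but the conclusion there is immediate.
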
 
\begin{proof}
  Consider $p^\star \in \P^\star$ which is an interior point of
  $\P$. Pick $\epsilon > 0$ so $\mathcal{B}_\epsilon(p^\star)
  \subseteq \P$. From~\eqref{eq:alpha-concave}, the point $p' =
  p^\star - \epsilon\nabla_p \alpha (s,p^\star)/\Vert \nabla_p \alpha
  (s,p^\star)\Vert_2 \in\P$ violates the condition that $p^\star$ is a
  solution to $\Px 3$ unless $\nabla_p \alpha(s,p^\star) = \zeros_m$,
  so this must be the case. Then, in accordance
  with~\eqref{eq:alpha-concave}, all $p \in \P$ are solutions, and
  the statement $\P^\star = \P$ follows.
\end{proof}

This lemma implies the solution set $\P^\star$ consists of noninterior
point(s) of $\P$ except in trivial cases. We now provide a stronger
result in the case where $\P$ is a polytope, which shows that
solutions tend to be contained in low-dimensional faces of $\P$ such
as edges (line segments) and vertices.

\begin{lem}\longthmtitle{Attacker's Solutions Tend Towards Low-Dimensional Faces}\label{lem:low-dim-face} 
  {\rm Let $\P = \mathcal{A}_1 \cap \dots \cap \mathcal{A}_r \subset
    \R^m$ be a compact polytope with half-spaces $\mathcal{A}_i$
    characterized by $a_i,b_i$ for $i\in\until{r}$, 
    and let
    $\mathcal{F}$ be a face of $\P$ with $a_j^\top p = b_j$ for
    $ j\in\mathcal{J}\subseteq \until{r}$,
    $\forall p\in\mathcal{F}$. 
    If a
    point $p^\star\in\relint{(\mathcal{F})}$ is 
    a solution to $\Px
    3$, then $\nabla_p \alpha(s,p^\star) \in \spn\{a_j\}_{j\in\mathcal{J}}$.}

\end{lem}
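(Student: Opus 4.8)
The plan is to exploit the fact that at a relative-interior point of the face $\mathcal{F}$ the optimizer $p^\star$ can be perturbed in \emph{both} directions along any tangent of $\aff(\mathcal{F})$ while remaining feasible, and then to apply the first-order concavity relation~\eqref{eq:alpha-concave} in essentially the same way as in the proof of Lemma~\ref{lem:nonint-point}. The two-sided feasibility is exactly what converts a one-sided optimality inequality into an orthogonality statement.

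First I would characterize the feasible perturbation directions inside $\mathcal{F}$. By the definition of the affine hull given in the preliminaries, $\aff(\mathcal{F})$ consists of points $p^\star + w$ with $w \perp \spn\{a_j\}_{j\in\mathcal{J}}$, so the admissible tangent directions are precisely the vectors $w \in \spn\{a_j\}_{j\in\mathcal{J}}^\perp$. Since $p^\star \in \relint(\mathcal{F})$, there is an $\epsilon > 0$ with $\mathcal{B}_\epsilon(p^\star) \cap \aff(\mathcal{F}) \subset \mathcal{F} \subseteq \P$. Hence for any such direction $w$ and all sufficiently small $\delta > 0$, both $p^\star + \delta w$ and $p^\star - \delta w$ lie in $\P$.

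Next I would apply~\eqref{eq:alpha-concave} with $p^1 = p^\star$ and $p^2 = p^\star \pm \delta w$. Concavity gives $\alpha(s, p^\star \pm \delta w) \leq \alpha(s, p^\star) \pm \delta\, \nabla_p \alpha(s, p^\star)^\top w$, while optimality of $p^\star$ for $\Px 3$ gives $\alpha(s, p^\star \pm \delta w) \geq \alpha(s, p^\star)$. Chaining these two inequalities forces $\pm\,\nabla_p\alpha(s,p^\star)^\top w \geq 0$; taking both sign choices yields $\nabla_p\alpha(s,p^\star)^\top w = 0$ for every $w \in \spn\{a_j\}_{j\in\mathcal{J}}^\perp$. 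This says $\nabla_p\alpha(s,p^\star) \perp \spn\{a_j\}_{j\in\mathcal{J}}^\perp$, and since we work in finite dimensions, $(\spn\{a_j\}_{j\in\mathcal{J}}^\perp)^\perp = \spn\{a_j\}_{j\in\mathcal{J}}$, which gives the claim.

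The main obstacle is the first step: correctly identifying the admissible directions as the full orthogonal complement $\spn\{a_j\}_{j\in\mathcal{J}}^\perp$ and verifying that the relative-interior condition genuinely permits motion in both $+w$ and $-w$. This two-sided freedom is the crux — it is what upgrades the one-sided bound (which by itself would only constrain the sign of one component of the gradient) into exact orthogonality, localizing to the affine hull of the face the same mechanism that forces $\nabla_p\alpha = \zeros_m$ at a true interior point in Lemma~\ref{lem:nonint-point}.
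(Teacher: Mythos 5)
Your proof is correct and follows essentially the same route as the paper's: both exploit the fact that near a point of $\relint(\mathcal{F})$ one can move within $\aff(\mathcal{F})$ (i.e., along $\spn\{a_j\}_{j\in\mathcal{J}}^\perp$) while staying feasible, and then apply the first-order concavity relation~\eqref{eq:alpha-concave} to constrain the gradient. The only cosmetic difference is that the paper argues by contradiction along the single direction $-\zeta$, where $\zeta$ is the component of $\nabla_p\alpha(s,p^\star)$ orthogonal to $\spn\{a_j\}_{j\in\mathcal{J}}$, whereas you use two-sided perturbations along arbitrary tangent directions to conclude orthogonality directly.
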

\begin{proof}
  Suppose that $\nabla_p \alpha(s,p^{\star}) \notin
  \spn\{a_j\}_{j\in\mathcal{J}}$ and decompose $\nabla_p
  \alpha(s,p^{\star}) = \beta+\zeta, \beta \in \spn\{a_j\}_{j\in\mathcal{J}}, \zeta\in
  \spn\{a_j\}_{j\in\mathcal{J}}^\perp, \zeta\neq \zeros_m$. 
Now consider the point
  $p' = p^{\star} - \epsilon \zeta/\Vert \zeta \Vert_2
  \in \mathcal{F}$ for some $\epsilon >
  0$. From~\eqref{eq:alpha-concave}, we have the relation
  $\alpha(s,p') \leq \alpha(s,p^{\star}) - \epsilon \Vert
  \zeta \Vert_2$ with $\epsilon > 0$ and $\Vert \zeta \Vert_2 > 0$, which contradicts
  $p^{\star}$ being a solution and completes the proof.
\end{proof}

To interpret the result of Lemma~\ref{lem:low-dim-face}, notice that $p\in\relint{(\mathcal{F})}$ implies $p$ does not belong to a lower dimensional face, and that the
the dimension of $\spn\{a_j\}_{j\in\mathcal{J}}$ 
becomes large only as the dimension of $\mathcal{F}$ becomes small. This intuitively suggests that the gradient of $\alpha$ at $p^\star$ may only
belong to $\spn\{a_j\}_{j\in\mathcal{J}}$ for a $p\in\mathcal{F}$ if this span is large
in dimension. This allows us to conservatively characterize the
solution set $\P^\star$, and the result gives credence to the notion
that solutions take values in low-dimensional faces of $\P$.

It is our intent to 
use
Lemmas~\ref{lem:nonint-point} and~\ref{lem:low-dim-face} to establish
intuition for the problem and justify solution strategies to the hard
problem of computing a preventive $s^\star$. Before proceeding, we establish one more simple lemma and provide
discussion on deterministically connected graphs.

\begin{lem}\longthmtitle{Deterministic Connectivity}\label{lem:det-conn}
  {\rm Assume $\exists p \in\P$ with $p \succ \zeros_m$ and that the attacker
    makes the last play. Let $L(x)$ indicate the random matrix whose
    elements are distributed as
    in~\eqref{eq:atk-prbty-const1}--\eqref{eq:atk-prbty-const2}. Then,
    a coordinator's strategy $s$ gives $\lambda_2 > 0$ of $L(x)$ with
    probability $1$ if and only if the elements of $s$ equal to $1$
    are associated with edges of a connected graph.}
\end{lem}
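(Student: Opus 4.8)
The plan is to reduce the statement to the standard spectral characterization recalled in Section~\ref{ssec:graph-theory}: for the realized (unweighted) graph encoded by $x\in\{0,1\}^m$, one has $\lambda_2(L(x)) > 0$ if and only if that graph is connected, since the multiplicity of the zero eigenvalue of $L(x)$ equals the number of its connected components. The whole lemma thereby becomes a statement about when the random graph is connected almost surely. First I would isolate the deterministic skeleton $\G_s$ consisting of exactly the edges $\{l : s_l = 1\}$, which are present in every realization because $\Pp(x_l \equiv 1) = 1$ whenever $s_l = 1$.

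For the sufficiency direction, I would argue that if $\G_s$ is a connected subgraph spanning all $n$ nodes, then every realization of the random graph contains $\G_s$ and is therefore connected, irrespective of the outcomes of the non-immune edges and irrespective of the attacker's choice of $p\in\P$. The spectral fact then yields $\lambda_2(L(x)) > 0$ with probability one; this direction is immediate once the fact is in hand.

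For the necessity direction I would argue by contraposition. Suppose $\G_s$ is not connected and spanning, so it has at least two connected components (counting any node with no incident immune edge as a singleton component). Invoking the hypothesis that there exists $p\succ\zeros_m$ in $\P$ together with the attacker making the last play, the attacker may select this $p$, under which each non-immune edge $l$ (those with $s_l = 0$) fails independently with strictly positive probability $p_l$. The event that all non-immune edges fail simultaneously then has probability $\prod_{l:\, s_l = 0} p_l > 0$, and on this event the realized graph equals $\G_s$, which is disconnected; hence $\lambda_2(L(x)) = 0$ with positive probability, contradicting probability-one connectivity.

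The main obstacle is less a calculation than fixing the correct reading of the statement: I must make precise what \emph{the attacker makes the last play} and \emph{with probability $1$} mean, and in particular justify that the worst case the coordinator must defend against is realized by the strictly positive $p$ furnished by the hypothesis. The role of the assumption $p \succ \zeros_m$ is exactly to guarantee that every non-immune edge can be threatened with positive failure probability; without it, an edge the attacker could never disconnect would behave like an immune edge and the equivalence would break. I would also take care to read \emph{connected graph} as a connected subgraph spanning all $n$ nodes, since an uncovered node forms its own component and destroys connectivity even when all immune edges are present.
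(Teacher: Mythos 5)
Your proposal is correct and follows essentially the same route as the paper's proof: the forward (necessity) direction argues that under any $p \succ \zeros_m$ all non-immune edges fail simultaneously with positive probability, leaving only the disconnected immune skeleton, while the reverse direction observes that a connected immune skeleton is present in every realization. Your write-up is somewhat more careful than the paper's (explicitly invoking independence to get the positive product, and noting that ``connected'' must mean spanning all $n$ nodes), but the underlying argument is identical.
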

\begin{proof}
  $\Rightarrow)$ We do not need to assume the attacker has made an
  optimal play with respect to $\Px 3$. Instead, consider any play $p
  \succ \zeros_m$ and note that each edge $l$ associated with $s_l = 0$ has a
  nonzero probability of being disconnected. Then, there is a nonzero
  probability that $x_l = 0$ for each $l$ with $s_l = 0$, and the
  remaining protected edges do not form a connected graph.  Then, if
  the elements of $s$ equal to one are not associated edges defining a
  connected graph, $\lambda_2 = 0$ is an event with nonzero
  probability. 
	
  $\Leftarrow)$ This direction is trivial: $x_l = 1$ with probability
  $1$ for edges $l$ corresponding to $s_l = 1$. If these edges form a
  connected graph, then $\lambda_2 > 0$ with probability $1$.
\end{proof}

The consequence of Lemma~\ref{lem:det-conn} is obvious: if the
coordinator does not have the resources to protect edges which form a
connected graph and the attacker targets all edges, then there is no
guarantee the resulting graph will be connected. A subject of future
work is to provide some insight on the lower bound of $\lambda_2$ for
particular cases of $\S$ and $\P$.

\subsection{Heuristics for Computing a Preventive Strategy}

Recall from the previous subsection that the goal is to compute
$s^\star$ as a solution to~\eqref{eq:prevent-strat}. In this
subsection, we describe three approaches to computing a satisfactory
solution and formally adopt the following assumptions.
\begin{assump}\longthmtitle{Coordinator's Problem is
    Solvable}\label{assump:coord-solvable}
  {\rm Given a known vector $p$, the coordinator can find the optimal
    solution of $\Px 2$.}
\end{assump}
\begin{assump}\longthmtitle{Attacker Plays Optimally and Last}\label{assump:atk-optimal}
  {\rm The attacker's play $p$ belongs to a convex, compact set
    $\P$. In addition, she always makes optimal plays which solve $\Px
    3$ given the coordinator's play $s$, and she may always play in
    response to the coordinator changing his decision.}
\end{assump}
\begin{assump}\longthmtitle{Available Information}\label{assump:coord-info}
  {\rm The coordinator is cognizant of
    Assumption~\ref{assump:atk-optimal} and has access to the current
    attacker play $p$. He may compute $\alpha(s,p)$ for a particular
    play $(s,p)$.}
\end{assump}
\begin{assump}\longthmtitle{Only Last Play Matters}\label{assump:payout}
  {\rm The objective $\alpha(s,p)$ (and, by extension, $\lambda_2$) is
    only consequential once both the coordinator and attacker have
    chosen their final strategy and do not make additional plays.}
\end{assump}

We now construct three heuristics for computing
$s^\star$, the latter two of which observe the attacker plays $p(t)$
over a time horizon $t\in\until{T}$ and iteratively construct a solution.

\begin{algorithm}
  \caption{Random Sampling}\label{alg:rand}
  \begin{algorithmic}[1]
    \Procedure{Rand}{$\S, \E, T$}
     \State Compute $(v_i - v_j)^2$ for each $l\sim(i,j)\in\E$
    \For{$t=1,\dots,T$} \State $\mathcal{M} \gets \until{m}$ \State
    $s(t) \gets \zeros_m$ \State done $\gets false$ \While{done $=
      false$} \State Choose $l\in\mathcal{M}$ uniformly
    randomly 
    \If{$s_l(t)\gets 1 \Rightarrow s(t)\in\S$} \State $s_l(t) \gets 1$
    \EndIf
    \State $\mathcal{M} \gets \mathcal{M}
    \setminus \{l\}$ \If{$\nexists
      l\in\mathcal{M}$ s.t. $s_l(t) \gets
      1 \Rightarrow s(t) \in\S$} \State
    done $= true$
    \EndIf
    \EndWhile
    \State Play $s(t)$
    \State Store $s(t)$ and $\alpha(s(t),p(t))$
    \EndFor
    \State $t^\star \gets
    \underset{t}{\text{argmax}} \
    \alpha(s(t),p(t))$ \State \textbf{return}
    $s^\star = s(t^\star)$
    \EndProcedure
  \end{algorithmic}
\end{algorithm}
Algorithm~\ref{alg:rand} is simple and doesn't utilize the attacker's
plays. For each $t$, it constructs $s(t)$ by picking edges uniformly
randomly. 
Each loop terminates when no feasible edges remain. The value of
$\alpha$ is recorded following the attacker's response $p(t)$, and the
best $s(t)$ is returned.

\begin{algorithm}
  \caption{Convex Combinations}\label{alg:cvx-comb}
  \begin{algorithmic}[1]
  	\Require Convex weighting function $\eta$ with $\sum_{k=1}^t \eta(k) = 1$
    \Procedure{Cvx}{$\S, \E, T$}
    \State $s(1) \gets \zeros_m$
    \State Store $s(1)$, $p(1)$ and $\alpha(s(1),p(1))$
    \State $p_{\theta}(1) \gets p(1)$
    \For{$t=2,\dots,T$}
    \State $s(t) \gets \underset{s\in\S}{\text{argmax}} \ \alpha (s,p_{\theta}(t-1))$
    \State Store $s(t)$, $p(t)$, and $\alpha(s(t),p(t))$
    \For{$k=1,\dots,t}$
    \State $\theta(k) \gets \eta (k)$
    \EndFor
    \State $p_{\theta}(t) \gets \sum_{k=1}^t \theta(k) p(k)$
    \EndFor
    \State $t^\star \gets \underset{t}{\text{argmax}} \ \alpha(s(t),p(t))$
    \State \textbf{return} $s^\star = s(t^\star)$
    \EndProcedure
  \end{algorithmic}
\end{algorithm}
Algorithm~\ref{alg:cvx-comb} utilizes a convex weighting function
$\eta$ such that $\sum_{k=1}^t\hspace{-1mm} \eta(k) \hspace{-1mm}=
\hspace{-1mm}1$. We suggest three possible choices for $\eta$:
\begin{equation*}
\begin{aligned}
\eta_1(k) &= 1/t, \\
\eta_2(k) &= \dfrac{\gamma^{(t-k)}}{\sum_k \gamma^{(t-k)}}, \ \gamma\in(0,1), \\
\eta_3(k) &= \dfrac{\alpha(s(k),p(k))}{\sum_k \alpha(s(k),p(k))}.
\end{aligned}
\end{equation*}
These may be interpreted 
as a uniform weighting of each
observation $p(t)$, a recency-biased weighting, and a penalty-biased
weighting, respectively. Algorithm~\ref{alg:cvx-comb} is motivated by a few
observations. Firstly, recall
Lemmas~\ref{lem:no-nash}--\ref{lem:low-dim-face} and note that $p(t)$
may jump around extreme points of $\P$ as $s(t)$ evolves. 
Successive convex combinations of the solutions $p(t)$ effectively push the coordinator's decision towards responding to the vulnerable parts of
the space over time. 

Algorithm~\ref{alg:point} adopts the following stronger version of
Assumption~\ref{assump:coord-solvable}.
\begin{assump}\longthmtitle{Coordinator's Problem is Solvable Over Finitely-Many Points}
  {\rm Given a finite set of points $\overline{\P} \subset
    \P$, the coordinator may compute the solution ${s^\star = \underset{s\in\S}{\text{argmax}} \ \underset{p\in\overline{\P}}{\text{min}} \ \alpha(s,p)}$.
    }
\end{assump}

\begin{algorithm}
  \caption{Constructing $\P$ via Pointwise Search}\label{alg:point}
  \begin{algorithmic}[1]
    \Procedure{Search}{$\S, \E, T$}
    \State $s(1) \gets \zeros_m$
    \State Store $s(1)$, $p(1)$, and $\alpha (s(1),p(1))$
    \State $\overline \P \gets p(1)$
    \For{$t=2,\dots,T$}
    \State $s(t) \gets \underset{s\in\S}{\text{argmax}} \ \underset{p\in\overline{\P}}{\text{min}} \ \alpha(s,p)$
    \State Store $s(t)$, $p(t)$ and $\alpha (s(t),p(t))$
    \State $\overline{\P} \gets \overline{\P} \cup \{p(t)\}$
    \EndFor
    \State $t^\star \gets \underset{t}{\text{argmax}} \ \alpha(s(t),p(t))$
    \State \textbf{return} $s^\star = s(t^\star)$
    \EndProcedure
  \end{algorithmic}
\end{algorithm}

Algorithm~\ref{alg:point} operates by computing the solution $s(t)$ as
the optimal play with respect to each of the previous attacker plays
$p(k), k < t$. Although it is more computationally demanding than
Algorithms~\ref{alg:rand} and~\ref{alg:cvx-comb}, it is strongly
rooted in the theoretical understanding of the problem we have developed in the
following sense: the convex hull of these points
$\chull{\overline{\P}}$ at time $t$ is a compact polytope whose
vertices are defined by the points $p(k)$. Applying
Lemma~\ref{lem:low-dim-face}, it stands to reason that points in the
interior or in higher-dimensional faces of
$\chull{\overline{\P}}$ 
are uncommon solutions. We expect $\chull{\overline{\P}}$ to grow in each loop of
the algorithm and effectively reconstruct the attacker's feasibility
set $\P$. 
For
now, convergence to the true $s^\star$ is not guaranteed due to the
difficulty of characterizing the evolution of $\nabla_p
\alpha(s,p)$ with $p$. However, we note in simulation studies that
Algorithm~\ref{alg:point} converges to the global optimizer in a few
iterations. Finally, we state the following trivial lemma for completeness.
\begin{lem}\longthmtitle{Nondecreasing Performance of Algorithms~\ref{alg:rand}--\ref{alg:point}}
  {\rm Let $p^\star$ solve $\Px 3$ for $s = s^\star(T)$, where
    $s^\star(T)$ is the returned strategy of
    Algorithm~\ref{alg:rand},~\ref{alg:cvx-comb}, or~\ref{alg:point}
    truncated at time $T$. For all $T > 1$, $\alpha(s^\star(T),
    p^\star) \geq \alpha(s^\star(T-1),p^\star)$.}
\end{lem}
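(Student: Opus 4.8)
The plan is to read this as a monotonicity-of-the-running-maximum statement, after identifying the stored objective values with the coordinator's preventive performance. First I would invoke Assumption~\ref{assump:atk-optimal}: since the attacker responds to each played $s(t)$ with an optimal solution $p(t)$ of $\Px 3$, the quantity stored by every algorithm satisfies
\begin{equation*}
\alpha(s(t),p(t)) = \underset{p\in\P}{\min}\ \alpha(s(t),p) =: g(s(t)),
\end{equation*}
where $g$ is exactly the inner objective of the preventive problem~\eqref{eq:prevent-strat}. Each of Algorithms~\ref{alg:rand}--\ref{alg:point} returns $s^\star = s(t^\star)$ with $t^\star = \underset{t\in\until{T}}{\argmax}\ \alpha(s(t),p(t))$, so $s^\star(T)$ is the candidate of largest worst-case value among those generated in the first $T$ rounds, i.e.\ $g(s^\star(T)) = \underset{t\in\until{T}}{\max}\ g(s(t))$.

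Next I would connect both sides of the inequality to this maximum. Because $p^\star$ solves $\Px 3$ for $s = s^\star(T)$, it is an optimal attacker response and hence $\alpha(s^\star(T),p^\star) = g(s^\star(T)) = \underset{t\in\until{T}}{\max}\, g(s(t))$; the specific minimizer chosen is irrelevant, as all solutions of $\Px 3$ attain the common optimal value. Reading the right-hand side as the analogous preventive performance of the strategy returned after one fewer round gives $\alpha(s^\star(T-1),p^\star_{T-1}) = \underset{t\in\until{T-1}}{\max}\, g(s(t))$, where $p^\star_{T-1}$ is an optimal response to $s^\star(T-1)$.

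The key structural observation I would then make is that truncating at $T$ versus $T-1$ generates the \emph{same} first $T-1$ candidates: each $s(t)$ is determined solely by the information available before round $t$ (the random draws in Algorithm~\ref{alg:rand}, and the observed plays $p(1),\dots,p(t-1)$ in Algorithms~\ref{alg:cvx-comb}--\ref{alg:point}), independently of the horizon. Thus the candidate index set for horizon $T-1$ is a subset of that for horizon $T$, and the result is immediate from monotonicity of a maximum over a nested set,
\begin{equation*}
g(s^\star(T)) = \underset{t\in\until{T}}{\max}\ g(s(t)) \geq \underset{t\in\until{T-1}}{\max}\ g(s(t)) = g(s^\star(T-1)).
\end{equation*}
Equivalently, running one extra round only appends the single candidate $s(T)$, which can never lower the running maximum: either it fails to improve, in which case $s^\star(T)=s^\star(T-1)$ and equality holds, or it improves and $s^\star(T)=s(T)$ strictly raises the stored value.

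As the paper indicates, there is no genuine obstacle here. The only point requiring care is notational: the performance being compared is the worst-case value $g(\cdot)=\min_{p\in\P}\alpha(\cdot,p)$, so one must (i) justify via Assumption~\ref{assump:atk-optimal} that the recorded $\alpha(s(t),p(t))$ equals $g(s(t))$, and (ii) interpret each side of the claimed inequality as the preventive value of its respective returned strategy rather than as an evaluation against a single fixed $p^\star$.
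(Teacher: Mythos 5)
Your argument is essentially the paper's own proof, which consists of the single observation that $s^\star(T) = s(t^\star)$ with $t^\star = \argmax_{t\in\until{T}}\alpha(s(t),p(t))$ and $\until{T-1}\subset\until{T}$, i.e.\ a running maximum over a nested index set. You are in fact more careful than the paper: you make explicit that Assumption~\ref{assump:atk-optimal} is what equates each recorded $\alpha(s(t),p(t))$ with the worst-case value $\min_{p\in\P}\alpha(s(t),p)$, and you correctly flag that the inequality only holds once each side is read as the preventive value of its own returned strategy --- the statement taken literally, with the single $p^\star$ (optimal against $s^\star(T)$ but possibly suboptimal against $s^\star(T-1)$) appearing on both sides, does not follow from the running-maximum argument, a gap present in the paper's one-line proof as well.
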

\begin{proof}
  The result is trivially seen in that $s^\star(T) = s(t^\star),
  t^\star = \underset{t\in\until{T}}{\text{argmax}} \
  \alpha(s(t),p(t))$ and $\until{T-1} \subset \until{T}$.
\end{proof}

\section{Simulations}\label{sec:sims}

We now examine our proposed SDP relaxation for solving $\Px 1$. For
the ease of comparison with the Fiedler vector heuristic given
in~\cite{AG-SB:06}, we do not include any additional convex
constraints beyond~\eqref{eq:gen-link-tot-const}. For a network with
$14$ nodes, $\vert \E_0 \vert = 28$ initial edges generated randomly,
we implement the Fiedler method, our SDP method, and the approach of
taking the convex hull of the feasibility set of $\Px 1$. We compute
$100$ trials with different topology initializations for each of the
$k\in\{25,40\}$ edge-addition cases. Our SDP design outperforms the
Fiedler vector heuristic in $75$ trials for the $k=25$ case, $80$
trials for the $k=40$ case, and it outperforms the convex hull
approach in all $100$ trials of both cases. This improved performance
is observed over a variety of network sizes and initial
connectivities, and we observe that increasing $k$ increases the
likelihood that our method outperforms the alternatives.

One such instance of $k = 25$ added edges is plotted in
Figure~\ref{fig:net-tops} and the performance is plotted in
Figure~\ref{fig:top-des}. Firstly, note that both our SDP method and
the Fiedler vector method greatly outperform the simple convex hull
approach. 
Our SDP method is outperformed by the Fiedler method in early
iterations. 
In later iterations, the performance of our
method catches up with and surpasses the Fiedler vector heuristic
(this is common behavior across other initializations). We contend
that the reason for this is that the relaxed solution of $\Px 4$ at each
iteration is cognizant of the entire problem horizon, as opposed to
the Fiedler vector heuristic which greedily chooses edges in
accordance with the direction of steepest ascent in $\lambda_2$.

\begin{figure}[h]
	\centering
	\includegraphics[scale=0.48]{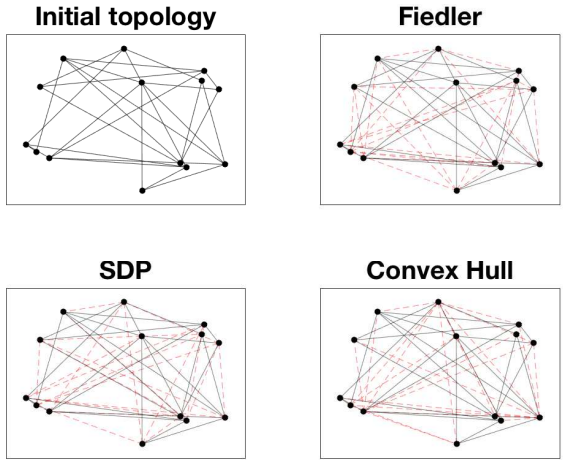}
	\caption{Initial topology of $14$ nodes and $28$ edges. Three
          methods are implemented to grow the network to $53$ edges,
          with the additional edges plotted as red dotted lines.}
	\label{fig:net-tops}
\end{figure}

\begin{figure}[h]
	\centering
	\includegraphics[scale=0.45]{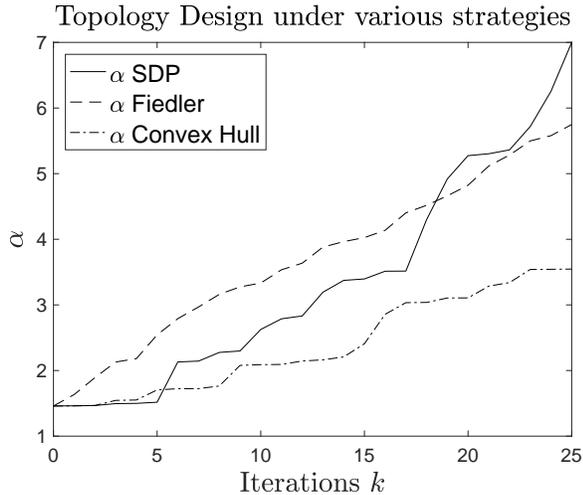}
	\caption{Performance each method over $k = 25$ iterations.}
	\label{fig:top-des}
\end{figure}

Next, we study a small network of $7$ nodes and $11$ edges so that the
solutions to $\Px 2$ and $\Px 3$ may be brute-forcibly computed to
test Algorithms~\ref{alg:rand}--\ref{alg:point}, with $\eta_1(k)$
being used for Algorithm~\ref{alg:cvx-comb}. 
We choose $\P = [0.25,0.75]^{11} \cap
\setdef{p}{\sum_l p_l \leq 4.25}$ and $\S = \{0,1\}^{11} \cap
\setdef{s}{\sum_l s_l \leq 5}$. We run the algorithms for $T = 30$
iterations and plot the results at each iteration in
Figure~\ref{fig:algs}.


\begin{figure}[h]
	\centering
	\includegraphics[scale=0.45]{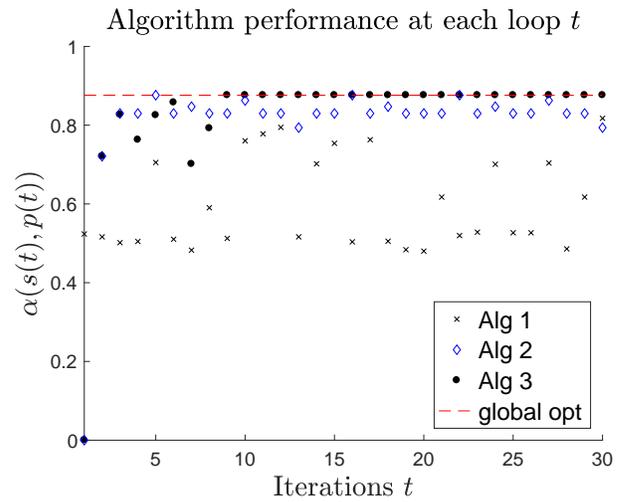}
	\caption{Performance of
          Algorithms~\ref{alg:rand}--\ref{alg:point} at each loop
          $t$.}
	\label{fig:algs}
\end{figure}

Clearly, Algorithm~\ref{alg:rand} does not improve across iterations
as it does not utilize information about the attacker's plays from
previous iterations, and it achieves a maximum value of
$\alpha(s(t^\star),p(t^\star)) = 0.8175$, which is a bit below the
global optimum $\alpha(s^\star,p^\star) =
0.8762$. Algorithm~\ref{alg:cvx-comb} achieves the global optimum
$\alpha(s(t^\star),p(t^\star)) = 0.8762$ the fastest, at $t^\star=5$,
although it never reaches this point again and instead oscillates
around suboptimal points, indicating that optimality may not be
reliably attained in general. 
Counter-intuitively, the performance of Algorithm~\ref{alg:point} does not improve
monotonically in $t$, although this should be expected: early solutions $s(t)$
may get ``lucky'', in some sense, but the subsequent iteration may allow
the attacker to jump to a new vulnerable part of the space. Once the
algorithm achieves the global optimum at $t^\star = 9$, it does not
dip below this for the remainder of the time horizon. Finally, we note that the behavior of each algorithm observed here is typical when implemented on other small graphs.

\section{Concluding Remarks}\label{sec:conclusion}

This paper introduced three related problems motivated by studying the algebraic connectivity of a graph by
adding edges to an initial topology or protecting edges under the case
of a disturbance or attack on the network. We developed a novel SDP relaxation to address the NP-hardness of the design and
demonstrated in simulation that it is superior to existing methods
which are greedy and cannot accommodate general constraints. In addition, we studied the
dynamics of the game that may be played between a network coordinator and
strategic attacker. We developed the notion of an
optimal preventive solution for the coordinator and proposed effective heuristics to find such a solution guided by characterizations of the solutions to the attacker's problem. Future work includes characterizing
the performance of our SDP relaxation and developing an algorithm which provably converges to the optimal
preventive strategy.


\bibliographystyle{abbrv}
\bibliography{alias,SMD-add}

\end{document}